\newtheorem{theorem}{Theorem}[section]
\theoremstyle{definition}
\newtheorem{remark}[theorem]{Remark}
\numberwithin{equation}{section}
\begin{document}

\baselineskip=15.5pt

\title[Diagonal property of symmetric product of a curve]{Diagonal property of
the symmetric product of a smooth curve}

\author[I. Biswas]{Indranil Biswas}

\address{School of Mathematics, Tata Institute of Fundamental
Research, Homi Bhabha Road, Bombay 400005, India}

\email{indranil@math.tifr.res.in}

\author[S. K. Singh]{Sanjay Kumar Singh}

\address{Institute of Mathematics, Polish Academy of Sciences, Warsaw,00656, Poland}

\email{s.singh@impan.pl}

\subjclass[2000]{14H60, 14F05}

\keywords{Diagonal property, symmetric product, weak point property, Quot scheme.}

\thanks{The first named author is supported by the J. C. Bose Fellowship. The
second named author is supported by IMPAN Postdoctoral Research Fellowship.}

\date{}

\begin{abstract}
Let $C$ be an irreducible smooth projective curve defined over an algebraically closed field.
We prove that the symmetric product ${\rm Sym}^d(C)$ has the diagonal property for all
$d\, \geq\, 1$. For any positive integers $n$ and $r$, let ${\mathcal Q}_{{\mathcal
O}^{\oplus n}_C}(nr)$ be the Quot scheme parametrizing all the torsion quotients of
${\mathcal O}^{\oplus n}_C$ of degree $nr$. We prove that ${\mathcal Q}_{{\mathcal
O}^{\oplus n}_C}(nr)$ has the weak point property.
\end{abstract}

\maketitle

\section{introduction}

In \cite{PSP}, Pragacz, Srinivas and Pati introduced the diagonal and (weak) point
properties of a variety, which we recall.

Let $X$ be a variety of dimension $d$ over an algebraically closed field $k$. It is said to
have the \textit{diagonal property} if there is a vector bundle $E\, \longrightarrow\,
X\times X$ of rank $d$, and a section $s\, \in\, H^0(X\times X,\, E)$, such that the zero
scheme of $s$ is the diagonal in $X\times X$. The variety $X$ is said to have the
\textit{weak point
property} if there is a vector bundle $F$ on $X$ of rank $d$, and a
section $t\, \in\, H^0(X,\, F)$, such that the zero scheme of $t$ is a (reduced) point
of $X$. The diagonal property implies the weak point property because the restriction of
the above section $s$ to $X\times \{x_0\}$ vanishes exactly on $x_0$.
 
These properties were extensively studied in \cite{PSP} and \cite{De}. In particular,
it was shown that
\begin{itemize}
\item they impose strong conditions on the variety,

\item on the other hand there are many example of varieties with these properties.
\end{itemize}

Here we investigate these conditions for some varieties associated to a smooth
projective curve.

Let $C$ be an irreducible smooth projective curve over $k$. For any positive integer $d$,
let $\text{Sym}^d(C)$ be the quotient of $C^d$ for the natural action of the group
of permutations of $\{1\, , \cdots\, ,d\}$. It is a smooth projective variety of dimension
$d$. We prove the following (Theorem \ref{thm2}):

\begin{theorem}\label{Thmi1}
The variety ${\rm Sym}^d(C)$ has the diagonal property.
\end{theorem}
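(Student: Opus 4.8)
The plan is to realize the diagonal as the zero scheme of a section of a rank $d$ bundle built from the universal divisor. Write $Y \,=\, {\rm Sym}^d(C)$, and recall that points of $Y$ are effective divisors of degree $d$ on $C$; since $C$ is smooth, $Y$ is a smooth projective variety of dimension $d$. Let $\mathcal{D} \,\subset\, C \times Y$ be the universal divisor, a relative effective Cartier divisor that is finite and flat of degree $d$ over $Y$. On the triple product $C \times Y \times Y$ I would pull back $\mathcal{D}$ along the two projections $C \times Y \times Y \,\to\, C \times Y$ to obtain divisors $\mathcal{D}_1$ and $\mathcal{D}_2$, where $\mathcal{D}_i$ records the condition that the point of $C$ lies on the $i$-th divisor, and I would take the canonical section $\sigma_2 \,\in\, H^0(C \times Y \times Y,\, \mathcal{O}(\mathcal{D}_2))$ whose zero divisor is $\mathcal{D}_2$.

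Let $q \colon C \times Y \times Y \,\to\, Y \times Y$ be the projection. The key construction is the sheaf
\[
E \;=\; q_*\bigl(\mathcal{O}_{C\times Y\times Y}(\mathcal{D}_2)\big|_{\mathcal{D}_1}\bigr)
\]
on $Y \times Y$. Because $q|_{\mathcal{D}_1} \colon \mathcal{D}_1 \,\to\, Y \times Y$ is finite and flat of degree $d$ (it is the base change of $\mathcal{D} \to Y$), the sheaf $E$ is locally free of rank $d$, and its fibre over $(D_1,D_2)$ is the $d$-dimensional space $H^0(D_1,\, \mathcal{O}_C(D_2)|_{D_1})$ of sections over the length-$d$ scheme $D_1$. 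Restricting $\sigma_2$ to $\mathcal{D}_1$ and pushing forward (legitimate since $q|_{\mathcal{D}_1}$ is affine) produces a section $s \,\in\, H^0(Y\times Y,\, E)$ whose value at $(D_1,D_2)$ is the restriction $\sigma_2|_{D_1}$ of the canonical section of $\mathcal{O}_C(D_2)$ to the subscheme $D_1$.

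I would then identify the zero scheme of $s$ with the diagonal. Set-theoretically, $s(D_1,D_2)=0$ if and only if the section cutting out $D_2$ vanishes on the whole of $D_1$, i.e. $D_1 \leq D_2$ as effective divisors; since $\deg D_1 = \deg D_2 = d$ this forces $D_1 = D_2$, so the support of $Z(s)$ is the diagonal $\Delta \cong Y$, of codimension $d$. As this equals the rank of $E$, the section $s$ is regular and $Z(s)$ is a local complete intersection, hence Cohen--Macaulay with no embedded points. It therefore remains to check generic reducedness along $\Delta$. For this I would work near a divisor $D = p_1 + \cdots + p_d$ with distinct points: there $Y$ is locally a product of curve neighborhoods, with coordinates $x_1,\ldots,x_d$ on the first factor and $y_1,\ldots,y_d$ on the second, the section $\sigma_2$ near $p_i$ is $t_i - y_i$, and its restriction to $D_1$ has $i$-th component $x_i - y_i$. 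Thus in these coordinates $s = (x_1 - y_1, \ldots, x_d - y_d)$, which visibly cuts out the reduced diagonal.

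The main obstacle is exactly the passage from the set-theoretic statement to the scheme-theoretic one. One must be sure the construction genuinely yields a vector bundle (flatness and cohomology-and-base-change for the universal divisor) and, more delicately, that $Z(s)$ carries the reduced structure rather than a nilpotent thickening along $\Delta$. The local coordinate computation at divisors with distinct support handles generic reducedness, while the regularity of $s$ together with the Cohen--Macaulayness of zero loci of regular sections upgrades this to $Z(s) = \Delta$ as schemes; this last implication is the step requiring the most care, and it establishes the diagonal property for ${\rm Sym}^d(C)$.
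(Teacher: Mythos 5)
Your construction is essentially identical to the paper's: both pull back the universal divisor to $C\times{\rm Sym}^d(C)\times{\rm Sym}^d(C)$ along two pairs of projections, restrict the canonical section of $\mathcal{O}(\mathcal{D})$ coming from one ${\rm Sym}^d(C)$ factor to the divisor coming from the other, and push forward along the degree-$d$ finite flat projection to get a section of a rank-$d$ bundle whose zero scheme is the diagonal (you merely swap which factor plays which role). Your argument is correct, and in fact your treatment of the scheme-theoretic identification of $Z(s)$ with the reduced diagonal (regularity of the section plus Cohen--Macaulayness plus the local computation at reduced divisors) is more detailed than the paper's rather terse justification of that step.
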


Theorem 1 in \cite[p. 1236]{PSP} contains several examples of surfaces satisfying the
diagonal property. We note that the surface ${\rm Sym}^2(C)$ is not among them.

For positive integers $n$ and $d$, let ${\mathcal Q}_{{\mathcal
O}^{\oplus n}_C}(d)$ be the Quot scheme parametrizing the torsion quotients of
${\mathcal O}^{\oplus n}_C$ of degree $d$. Quot schemes were constructed
in \cite{Gr} (see \cite{Ni} for
an exposition on \cite{Gr}). The variety ${\mathcal Q}_{{\mathcal O}^{\oplus n}_C}(d)$
is smooth projective, and its dimension is $nd$. Note that ${\mathcal Q}_{{\mathcal
O}_C}(d)\,=\, \text{Sym}^d(C)$. These varieties ${\mathcal Q}_{{\mathcal
O}^{\oplus n}_C}(d)$ are extensive studied in algebraic
geometry and mathematical physics (see \cite{bgl}, \cite{bdw}, \cite{Ba}, \cite{br} and
references therein).

We prove the following (Theorem \ref{Quot}):

\begin{theorem}\label{thmi2}
If $d$ is a multiple of $n$, then the
variety ${\mathcal Q}_{{\mathcal O}^{\oplus n}_C}(d)$ has the weak point property.
\end{theorem}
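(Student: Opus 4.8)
The plan is to exploit the fact that $\mathcal{Q}:=\mathcal{Q}_{\mathcal{O}^{\oplus n}_C}(nr)$ is a fine moduli space, so it carries a universal short exact sequence $0\to\mathcal{S}\to\mathcal{O}^{\oplus n}_{C\times\mathcal{Q}}\to\mathcal{T}\to 0$ on $C\times\mathcal{Q}$, and to build the required section out of this universal family together with one fixed quotient. Fix a point $x_0=[\,\mathcal{O}^{\oplus n}_C\xrightarrow{q_0}T_0\,]\in\mathcal{Q}$ and let $E_0=\ker(q_0)$, a locally free subsheaf of $\mathcal{O}^{\oplus n}_C$ of rank $n$ and colength $nr$. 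Write $p\colon C\times\mathcal{Q}\to\mathcal{Q}$ for the projection, and let $T_0$ also denote its pullback to $C\times\mathcal{Q}$. I would then set $F:=p_*\,\mathcal{H}om(\mathcal{S},T_0)$.

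First I would verify that $F$ is a vector bundle of the correct rank $n\cdot nr=n^2r=\dim\mathcal{Q}$. Fibrewise $F$ is $\operatorname{Hom}(E,T_0)$, where $E\subset\mathcal{O}^{\oplus n}_C$ is the kernel of the quotient parametrized by the point; since $E$ is locally free of rank $n$ and $T_0$ is torsion of length $nr$, one has $\dim\operatorname{Hom}(E,T_0)=n\cdot nr$ and $\operatorname{Ext}^1(E,T_0)=H^1(C,\mathcal{H}om(E,T_0))=0$, the sheaf $\mathcal{H}om(E,T_0)$ being torsion. Cohomology and base change then give that $F$ is locally free of rank $n^2r$. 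The same vanishing shows that $\mathcal{S}$ is locally free on the smooth variety $C\times\mathcal{Q}$ (equivalently, that $\mathcal{T}$ has projective dimension one there), so the construction is legitimate.

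Next comes the section. The composite $\mathcal{S}\hookrightarrow\mathcal{O}^{\oplus n}_{C\times\mathcal{Q}}\xrightarrow{q_0}T_0$ of the universal inclusion with the fixed (pulled-back) quotient is a homomorphism $\mathcal{S}\to T_0$ on $C\times\mathcal{Q}$, hence a global section $t\in H^0(\mathcal{Q},F)$. At a point $[q\colon\mathcal{O}^{\oplus n}_C\to T]$ with kernel $E$, the value of $t$ is the composite $E\hookrightarrow\mathcal{O}^{\oplus n}_C\xrightarrow{q_0}T_0$, which vanishes precisely when $E\subseteq E_0$. Since $E$ and $E_0$ share both the rank $n$ and the colength $nr$, the inclusion $E\subseteq E_0$ forces $E=E_0$, i.e.\ the two quotients coincide. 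Thus the zero locus of $t$ is set-theoretically the single point $x_0$.

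The main point, and the step I expect to require the most care, is to show that this zero locus is reduced, i.e.\ that $t$ meets the zero section transversally at $x_0$. This amounts to identifying the derivative $d t_{x_0}\colon T_{x_0}\mathcal{Q}\to F_{x_0}$ under the canonical identifications $T_{x_0}\mathcal{Q}=\operatorname{Hom}(E_0,T_0)=F_{x_0}$ and checking that it is an isomorphism. I would compute it by a first-order deformation over $k[\varepsilon]/(\varepsilon^2)$: a tangent vector $\phi\in\operatorname{Hom}(E_0,T_0)$ corresponds to a flat subsheaf $\widetilde E\subset\mathcal{O}^{\oplus n}_C\otimes k[\varepsilon]$ whose general element has the form $a+\varepsilon b$ with $a\in E_0$ and $q_0(b)=\phi(a)$; applying $q_0$ gives $q_0(a+\varepsilon b)=\varepsilon\,\phi(a)$, so the $\varepsilon$-part of $t$ along this deformation is exactly $\phi$. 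Hence $d t_{x_0}=\mathrm{id}$, the zero scheme is the reduced point $x_0$, and $\mathcal{Q}$ has the weak point property. I would add two remarks: the hypothesis $n\mid d$ is not actually used in this argument, which applies to every $\mathcal{Q}_{\mathcal{O}^{\oplus n}_C}(d)$; and the identical construction on $\mathcal{Q}\times\mathcal{Q}$, using $\mathcal{H}om(\mathcal{S}_1,\mathcal{T}_2)$ and the composite $\mathcal{S}_1\to\mathcal{O}^{\oplus n}\to\mathcal{T}_2$, yields a section whose zero scheme is the diagonal, so the method in fact appears to give the diagonal property, the statement above being its restriction to $\mathcal{Q}\times\{x_0\}$ in the manner recalled in the introduction.
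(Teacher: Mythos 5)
Your proof is correct, but it takes a genuinely different route from the paper's. The paper first uses the divisibility hypothesis to twist: writing $d=rn$ and $L=\mathcal{O}_C(rx_0)$, it identifies $\mathcal{Q}_{\mathcal{O}_C^{\oplus n}}(d)$ with $\mathcal{Q}_{L^{\oplus n}}(d)$, pulls back the universal divisor of ${\rm Sym}^d(C)$ along the determinant morphism $\varphi$ to get an incidence divisor $\mathcal{D}\subset C\times\mathcal{Q}_{L^{\oplus n}}(d)$, takes $F=q_*p^*L^{\oplus n}$, and uses the pushforward of the constant section $s_0^{\oplus n}$ of $L^{\oplus n}$, with the distinguished point being the quotient $L^{\oplus n}/\mathcal{O}_C^{\oplus n}$; reducedness is read off geometrically from the vanishing locus of $s_0$. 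You instead work intrinsically with the universal subsheaf $\mathcal{S}$, take $F=p_*\mathcal{H}om(\mathcal{S},T_0)$ with the section obtained by composing $\mathcal{S}\hookrightarrow\mathcal{O}^{\oplus n}$ with a fixed quotient $q_0$, and establish reducedness by identifying the derivative at $x_0$ with the identity on $\operatorname{Hom}(E_0,T_0)$ via the standard first-order deformation description of the tangent space of the Quot scheme. The technical points you need beyond the paper's toolkit --- local freeness of $\mathcal{S}$ on $C\times\mathcal{Q}$, cohomology and base change (immediate here since $\mathcal{H}om(\mathcal{S},T_0)$ is supported on $\operatorname{Supp}(T_0)\times\mathcal{Q}$, which is finite over $\mathcal{Q}$), and the tangent-space computation --- are all handled correctly. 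What your approach buys is substantial: the hypothesis $n\mid d$ disappears, so you prove the weak point property for every $\mathcal{Q}_{\mathcal{O}_C^{\oplus n}}(d)$, strictly more than Theorem \ref{thmi2}; and, as you observe, the same construction on $\mathcal{Q}\times\mathcal{Q}$ with $\mathcal{H}om(\mathcal{S}_1,\mathcal{T}_2)$ should yield the full diagonal property, which the paper obtains only for $n=1$ (i.e.\ for ${\rm Sym}^d(C)$). If you pursue that remark, the one step still to be written out is the analogue of your derivative computation along the diagonal of $\mathcal{Q}\times\mathcal{Q}$, where the differential becomes $(\phi_1,\phi_2)\mapsto\phi_2-\phi_1$ up to sign and one must check its kernel is exactly the tangent space of the diagonal.
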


\section{Quot Scheme and the weak point property}

We continue with the notation of the introduction.

For a locally free coherent sheaf $E$ of rank $n$ on $C$, let $\mathcal{Q}_E(d)$
be the Quot scheme parametrizing all torsion quotients of $E$ of 
degree $d$. Equivalently, $\mathcal{Q}_E(d)$ parametrizes all
coherent subsheaves of $E$ of rank $n$ and degree $\text{degree}(E)-d$. Note that
any coherent subsheaf of $E$ is locally free because any torsionfree coherent
sheaf on a smooth curve is locally free. This
$\mathcal{Q}_E(d)$ is an irreducible smooth projective variety of dimension
$nd$.

There is a natural morphism
$$
\varphi'\, :\, \mathcal{Q}_E(d)\,\longrightarrow\, \mathcal{Q}_{\wedge^n E}(d)
$$
that sends any subsheaf $S\, \subset\, E$ of rank $n$ and degree $\text{degree}(E)-d$
to the subsheaf $\bigwedge^n S \,\subset\, \bigwedge^n E$. Next note that $\mathcal{Q}_{\wedge^n
E}(d)$ is identified with the symmetric product $\text{Sym}^d(C)$ by sending any subsheaf
$S' \subset\, \bigwedge^n E$ to the scheme theoretic support of the quotient
sheaf $(\bigwedge^n E)/S'$. Let 
\begin{equation}\label{vp}
\varphi\, :\, \mathcal{Q}_E(d)\,\longrightarrow\, \text{Sym}^d(C)
\end{equation}
be the composition of $\varphi'$ with this identification of $\mathcal{Q}_{\wedge^n E}(d)$
with $\text{Sym}^d(C)$. It should be mentioned that for a subsheaf $S\, \subset\, E$ of
rank $n$ and $\text{degree}(E)-d$, the image $\varphi(S)\,\in\, \text{Sym}^d(C)$ does
not, in general, coincide with the scheme theoretic support of the quotient sheaf $E/S$.

The symmetric product ${\rm Sym}^d(C)$ is the moduli space of effective divisors of
degree $d$ on $C$. Let
\begin{equation}\label{f-1}
D\, \subset\, Y\, :=\, C\times {\rm Sym}^d(C)
\end{equation}
be the universal divisor. So the fiber of
$D$ over a point $a\, \in\, {\rm Sym}^d(C)$ is the zero dimensional
subscheme of $C$ of length $d$ defined by $a$. Let
\begin{equation}\label{f-f}
\mathcal{D}\,=\, (\text{Id}_C\times\varphi)^{-1}(D)\, \subset\, C\times
\mathcal{Q}_E(d)
\end{equation}
be the inverse image of $D$, where $\varphi$ is constructed in \eqref{vp}.

\begin{remark}\label{cong}
Let L be a line bundle on $C$. For $E$ as above, if $S\, \subset\, E$ is a subsheaf
of rank $n$ and degree $\text{degree}(E)-d$, then
$$
S\otimes L\, \subset\, E\otimes L
$$
is a subsheaf of rank $n$ and degree
$\text{degree}(E\otimes L)-d$. Therefore, we get an isomorphism
$$
\mathcal{Q}_E(d)\, \stackrel{\sim}{\longrightarrow}\,
\mathcal{Q}_{E\otimes L}(d)
$$
by sending any subsheaf $S\,\subset\, E$ to the
subsheaf $S\otimes L\,\subset\, E\otimes L$.
\end{remark}

\begin{theorem}\label{Quot}
For positive integers $d,n$ such that $d$ is a multiple of $n$, the Quot scheme
$\mathcal{Q}_{\mathcal{O}_C^n}(d)$ satisfies the weak point property.
\end{theorem}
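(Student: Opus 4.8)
The plan is to use the morphism $\varphi\colon \mathcal{Q}\to \text{Sym}^d(C)$ of \eqref{vp}, where $\mathcal{Q}:=\mathcal{Q}_{\mathcal{O}_C^n}(d)$, and to build the required rank $nd$ bundle as a direct sum of a piece pulled back from $\text{Sym}^d(C)$ and a piece accounting for the fibres of $\varphi$. By Theorem \ref{Thmi1}, $\text{Sym}^d(C)$ has the diagonal property: there is a rank $d$ bundle $\mathcal{E}$ on $\text{Sym}^d(C)\times\text{Sym}^d(C)$ with a section vanishing exactly on the diagonal. Restricting this section to $\text{Sym}^d(C)\times\{a_0\}$ yields, for an \emph{arbitrary} point $a_0$, a rank $d$ bundle $W$ on $\text{Sym}^d(C)$ and a section $\sigma$ whose zero scheme is the reduced point $a_0$. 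Because $a_0$ is free to choose, I would take $a_0=q_1+\cdots+q_d$ with the $q_i\in C$ pairwise distinct, and set $F_1=\varphi^*W$, $s_1=\varphi^*\sigma$, so that $Z(s_1)=\varphi^{-1}(a_0)$ scheme-theoretically.

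First I would identify the fibre $\varphi^{-1}(a_0)$. A point of it is a subsheaf $S\subset\mathcal{O}_C^n$ with $\det S=\mathcal{O}_C(-a_0)$, equivalently a length $d$ torsion quotient $\mathcal{O}_C^n\twoheadrightarrow T$ whose determinant divisor is $a_0$; as $a_0$ is reduced, $T$ has length one at each $q_i$, and a length one quotient of $\mathcal{O}_{C,q_i}^n$ is a point of $\mathbb{P}^{n-1}$. Thus $\varphi^{-1}(a_0)\cong(\mathbb{P}^{n-1})^d=:P$, smooth of dimension $d(n-1)=\dim\mathcal{Q}-\dim\text{Sym}^d(C)$. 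To know that $Z(s_1)=P$ as a \emph{reduced} scheme I must check that $\varphi$ is smooth along $P$. Using the identification $T_{[S]}\mathcal{Q}=\text{Hom}(S,T)$ and decomposing a first-order deformation of each local quotient $\mathcal{O}_{C,q_i}^n\twoheadrightarrow T_{q_i}$ into a part moving the support point $q_i$ and a part rotating the quotient direction, one sees that $d\varphi$ is surjective with kernel $T_{[S]}P$; this deformation computation is the main thing to pin down (it must hold in every characteristic, so generic smoothness is unavailable).

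Next I would construct the fibre-direction bundle from the universal torsion quotient $\mathcal{T}$ on $C\times\mathcal{Q}$. Writing $p_2\colon C\times\mathcal{Q}\to\mathcal{Q}$, the sheaf $V:=p_{2*}\mathcal{T}$ is locally free of rank $d$ and commutes with base change, since $\mathcal{T}$ is $\mathcal{Q}$-flat with zero-dimensional fibrewise support of length $d$. Over $P$ one computes $V|_P\cong\bigoplus_{i=1}^d\mathrm{pr}_i^*\mathcal{O}_{\mathbb{P}^{n-1}}(1)$, the $i$-th summand being the tautological quotient line bundle of the $i$-th factor. Moreover the constant sections $k^n=H^0(C\times\mathcal{Q},\mathcal{O}_C^n)\to H^0(\mathcal{Q},V)$ restrict on $P$ to diagonal linear forms: a vector $v\in k^n$ goes to $(\ell_v,\dots,\ell_v)$, where $\ell_v$ is evaluation at $v$, a section of $\mathcal{O}_{\mathbb{P}^{n-1}}(1)$ on each factor. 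I would then set $F'=V^{\oplus(n-1)}$, of rank $d(n-1)$, with section $s'=(v_1,\dots,v_{n-1})$ coming from linearly independent $v_1,\dots,v_{n-1}\in k^n$. On each factor the forms $\ell_{v_1},\dots,\ell_{v_{n-1}}$ cut out a single reduced point, so $s'|_P$ vanishes at exactly one reduced point of $P$.

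Finally I would take $F=F_1\oplus F'$, of rank $d+d(n-1)=nd=\dim\mathcal{Q}$, with section $t=(s_1,s')$. Since $Z(s_1)=P$ scheme-theoretically, the zero scheme of $t$ equals $Z(s_1)\cap Z(s')=Z(s'|_P)$, a single reduced point, which is precisely the weak point property. The only nonformal input is the reducedness of the scheme-theoretic fibre (the surjectivity of $d\varphi$ above), which I expect to be the crux; I note that, organised this way, the argument does not appear to require that $n$ divide $d$.
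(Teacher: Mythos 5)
Your route is genuinely different from the paper's. The paper twists by $L=\mathcal{O}_C(rx_0)$, takes the rank-$nd$ bundle $F=q_*p^*L^{\oplus n}$ for $q\colon\mathcal{D}\to\mathcal{Q}_{L^{\oplus n}}(d)$ the degree-$d$ finite cover coming from the universal divisor, and pushes forward the constant section of $L^{\oplus n}$; everything is concentrated over the single, highly non-reduced divisor $\varphi(t_0)$. You instead split the rank-$nd$ bundle as (pullback of a point-detecting bundle on $\mathrm{Sym}^d(C)$) $\oplus$ (a fibre-direction bundle built from the universal quotient), working over a reduced divisor $a_0$. Your bookkeeping is right: $F_1\oplus F'$ has rank $d+d(n-1)=nd$, the zero scheme of $(s_1,s')$ is $Z(s'|_{Z(s_1)})$, the identification $V|_P\cong\bigoplus_i\mathrm{pr}_i^*\mathcal{O}_{\mathbb{P}^{n-1}}(1)$ and the diagonal form of the constant sections are correct (cohomology and base change for $p_{2*}\mathcal{T}$ is immediate since $\mathcal{T}$ is $\mathcal{Q}$-flat with zero-dimensional fibres), and $n-1$ independent linear forms cut out a reduced point of each factor.

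The one genuine gap is the step you yourself flag: you must prove that the scheme-theoretic fibre $\varphi^{-1}(a_0)$ is the \emph{reduced} variety $P=(\mathbb{P}^{n-1})^d$, i.e.\ that $d\varphi$ is surjective along $P$, and you only assert the deformation computation. Without this, $Z(s_1)$ could be a thickening of $P$ and $Z(t)$ a non-reduced point, which does not give the weak point property. There is a characteristic-free argument that avoids $\mathrm{Hom}(S,T)$ entirely: let $U\subset C^d$ be the complement of the big diagonal and $U'\subset\mathrm{Sym}^d(C)$ its image, so that $U\to U'$ is finite \'etale. After base change along $U\to U'$, a torsion quotient of $\mathcal{O}_C^{\oplus n}$ supported on $d$ labelled distinct points splits canonically (in families) as a direct sum of length-one quotients at those points, giving an isomorphism $\mathcal{Q}\times_{U'}U\cong U\times(\mathbb{P}^{n-1})^d$ over $U$. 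Hence $\varphi$ is smooth over $U'$, the fibre over $a_0$ is reduced and equal to $P$, and the same trivialization yields your descriptions of $V|_P$ and of the constant sections for free. With this inserted, your proof is complete.

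Two points of comparison. First, as you note, your argument never uses $n\mid d$: it proves the weak point property of $\mathcal{Q}_{\mathcal{O}_C^{\oplus n}}(d)$ for all $n,d$, which is stronger than the theorem as stated. Second, your choice to work over a reduced divisor is not merely stylistic: in the paper's proof, $\varphi(t_0)$ is the divisor of $\bigwedge^n\iota^{\oplus n}\colon\mathcal{O}_C\hookrightarrow\mathcal{O}_C(nrx_0)$, so the fibre $q^{-1}(t_0)$ is $(nrx_0)\times\{t_0\}$ of length $d=nr$ (as it must be for a degree-$d$ finite map), rather than $(rx_0)\times\{t_0\}$ as claimed, and $s_0$ does not vanish on the length-$nr$ scheme $nrx_0$ when $n\geq2$. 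Your approach sidesteps exactly the behaviour of $\varphi$ over non-reduced divisors where that argument becomes delicate.
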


\begin{proof}
Let $r\, \in \, \mathbb N$ be such that $d\,=\, rn$. 
Fix a closed point $x_0$ in $C$. The line bundle ${\mathcal O}_C(rx_0)$ on
$C$ will be denoted by $L$. By Remark \ref{cong} it is enough to prove the
weak point property for $\mathcal{Q}_{L^{\oplus n}}(d)$.
 
Let $\mathcal{D}\,\hookrightarrow\, C\times \mathcal{Q}_{L^{\oplus n}}(d)$ be
the divisor constructed in \eqref{f-f}. Let
\begin{equation}\label{e0}
p\, :\, \mathcal{D}\, \longrightarrow\, C~\ \text{ and }~\ \,
q:\, \mathcal{D}\, \longrightarrow\,\mathcal{Q}_{L^{\oplus n}}(d)
\end{equation}
be the projections. Taking the direct sum of copies of the natural inclusion
$$
\iota\, :\, \mathcal{O}_C\, \hookrightarrow\, \mathcal{O}_C(rx_0)\, ,
$$
we get a short
exact sequence of sheaves on $C$
\begin{equation}\label{e1}
0\, \longrightarrow\, \mathcal{O}_C^{\oplus n} \stackrel{\iota^{\oplus n}}{\longrightarrow}\,
\mathcal{O}_C(rx_0)^{\oplus n}\, \longrightarrow\, T\, \longrightarrow\, 0\, ,
\end{equation}
where $T$ is a torsion sheaf on $C$ of degree $nr\,=\, d$. Therefore, this quotient
$T$ is represented by a point of $\mathcal{Q}_{L^{\oplus n}}(d)$. Let
\begin{equation}\label{e3}
t_0\, \in\, \mathcal{Q}_{L^{\oplus n}}(d)
\end{equation}
be the point representing $T$.

The direct image
$$
F\, :=\, q_*p^* L^{\oplus n}\, \longrightarrow\,
\mathcal{Q}_{L^{\oplus n}}(d)
$$
is a vector bundle of rank $nd$, where $p$ and $q$ are the projections in
\eqref{e0}. We will construct a section of $F$.
The section of $L\,=\, \mathcal{O}_C(rx_0)$ given by the constant function $1$
will be denoted by $s_0$. Consider the section
$$
s\,:=\, \iota^{\oplus n}(s^{\oplus n}_0)\, \in\,
H^0(C,\, L^{\oplus n})\, ,
$$
where $\iota^{\oplus n}$ is the homomorphism in \eqref{e1}. We have
\begin{equation}\label{e2}
\widetilde{s}\, :=\, q_*p^*s \, \in\, H^0(\mathcal{Q}_{L^{\oplus n}}(d),\, F)\, .
\end{equation}

For the point $t_0$ in \eqref{e3}, the scheme theoretic inverse image
$$
q^{-1}(t_0)\,\subset\,
\mathcal{D}\, \subset\, C\times \mathcal{Q}_{L^{\oplus n}}(d)
$$
is $(rx_0)\times t_0$, where $q$ is the projection in \eqref{e0}. Since the section $s_0$
vanishes exactly on $rx_0$, this implies that
the section $\widetilde{s}$ in \eqref{e2} vanishes exactly on the reduced point $t_0$.
Therefore, $\mathcal{Q}_{L^{\oplus n}}(d)$ has the weak point property.
\end{proof}

\section{Diagonal property for symmetric product of curves}

\begin{theorem}\label{thm2}
For any $d\, \geq\, 1$, the symmetric product ${\rm Sym}^d(C)$ of a smooth projective
curve $C$ has the diagonal property.
\end{theorem}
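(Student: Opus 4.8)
The plan is to build the required rank-$d$ bundle on $\text{Sym}^d(C)\times \text{Sym}^d(C)$ directly from the universal divisor, and to detect equality of two degree-$d$ divisors by restricting the tautological section of one to the other. Write $X:=\text{Sym}^d(C)$, so $\dim X=d$ and we must produce a rank-$d$ bundle on $X\times X$ together with a section whose zero scheme is the diagonal $\Delta$. Recall the universal divisor $D\subset C\times X$ of \eqref{f-1}; it is finite and flat of degree $d$ over $X$, and it is the zero scheme of a tautological section $\theta$ of $\mathcal{O}_{C\times X}(D)$. On $C\times X\times X$ let $D_1$ and $D_2$ be the pullbacks of $D$ along the two projections $C\times X\times X\to C\times X$, let $\mathcal{M}$ be the pullback of $\mathcal{O}_{C\times X}(D)$ along the second projection, and let $\theta_2$ be the induced section of $\mathcal{M}$, whose zero scheme is $D_2$. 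Writing $\pi\colon D_1\to X\times X$ for the finite, flat, degree-$d$ projection, I set $E:=\pi_*(\mathcal{M}|_{D_1})$, a vector bundle of rank $d$ on $X\times X$, and $s:=\pi_*(\theta_2|_{D_1})\in H^0(X\times X,E)$.

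First I would identify the zero locus set-theoretically. Over a point $(a,b)\in X\times X$ the fibre of $E$ is $H^0(a,\mathcal{O}_C(b)|_a)$, a $d$-dimensional space, and $s(a,b)$ is the restriction to the length-$d$ subscheme $a$ of the canonical section of $\mathcal{O}_C(b)$ that vanishes on $b$. This restriction is zero exactly when $a\le b$ as effective divisors; since $a$ and $b$ both have degree $d$, this holds if and only if $a=b$. Hence $Z(s)_{\text{red}}=\Delta$, and moreover $s|_\Delta=0$, so $\Delta\subseteq Z(s)$ as schemes.

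The hard part will be upgrading this to the scheme-theoretic equality $Z(s)=\Delta$. Since every irreducible component of $Z(s)$ lies in the irreducible $d$-dimensional set $\Delta$, while a section of a rank-$d$ bundle on the smooth variety $X\times X$ cannot cut out anything of codimension exceeding $d$, the scheme $Z(s)$ has pure codimension $d$. Thus $s$ is a regular section and $Z(s)$ is a local complete intersection, in particular Cohen--Macaulay. By Serre's criterion it then suffices to prove that $Z(s)$ is generically reduced, i.e. that $s$ is transverse to the zero section at the generic point of $\Delta$. For this I would take $a=p_1+\cdots+p_d$ with the $p_i$ distinct (such divisors are dense in $X$) and compute the differential $ds_{(a,a)}\colon T_aX\oplus T_aX\to E_{(a,a)}$. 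Using the identifications $T_aX=H^0(a,\mathcal{O}_C(a)|_a)=\bigoplus_i\mathcal{O}_C(a)_{p_i}$ and $E_{(a,a)}=\bigoplus_i\mathcal{O}_C(a)_{p_i}$, the restriction of $ds_{(a,a)}$ to the second summand is, in local coordinates trivializing $\mathcal{O}_C(b)$ near each $p_i$, diagonal with nonzero entries: moving the $i$-th zero of $b$ away from $p_i$ changes the value $\theta_b(p_i)$ to first order, while it leaves the other components fixed to first order. Hence $ds_{(a,a)}$ is already surjective on the second summand, so $Z(s)$ is smooth, in particular reduced, at $(a,a)$.

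Finally, a Cohen--Macaulay scheme that is generically reduced is reduced, so $Z(s)=Z(s)_{\text{red}}=\Delta$. The pair $(E,s)$ therefore exhibits the diagonal property of $X=\text{Sym}^d(C)$. The only delicate point is the transversality computation of the previous paragraph; everything else is formal, and that computation is local and reduces to the elementary fact that the value at a fixed point of the defining section of a moving divisor varies to first order precisely as one of its zeros crosses that point.
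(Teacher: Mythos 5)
Your construction is, up to swapping the two factors of ${\rm Sym}^d(C)\times{\rm Sym}^d(C)$, exactly the one in the paper: push forward to $X\times X$ the tautological section of $\mathcal{O}(D)$ attached to one factor, restricted to the universal length-$d$ subscheme attached to the other, and identify the zero locus set-theoretically via the same containment argument for effective divisors of equal degree. The proof is correct, and your Cohen--Macaulay plus generic-transversality argument for the scheme-theoretic equality $Z(s)=\Delta$ actually supplies more detail than the paper, which asserts that the zero scheme is the reduced diagonal rather tersely.
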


\begin{proof}
Consider the divisor $D$ in \eqref{f-1}. Let
\begin{equation}\label{f0}
L\,=\, {\mathcal O}_Y(D)\, \longrightarrow\, Y
\end{equation}
be the line bundle. Now consider $Z \,:=\, Y\times {\rm Sym}^d(C)\,=\,
C\times {\rm Sym}^d(C)\times {\rm Sym}^d(C)$.
Let
\begin{equation}\label{f1}
\alpha\, :\, Z\, \longrightarrow\, C\, , \ \beta\, :\, Z\, \longrightarrow\, {\rm Sym}^d(C)~\
\text{ and } ~\ \gamma\, :\, Z\, \longrightarrow\,{\rm Sym}^d(C)
\end{equation}
be the projections defined by $(x\, ,y\, ,z)\,\longmapsto\, x$,
$(x\, ,y\, ,z)\,\longmapsto\, y$ and $(x\, ,y\, ,z)\,\longmapsto\, z$ respectively.
Let
\begin{equation}\label{f2}
\widetilde{D}\, :=\, (\alpha\times\gamma)^{-1}(D)\,\subset\,
C\times {\rm Sym}^d(C)\times {\rm Sym}^d(C) \,=\, Z
\end{equation}
be the inverse image, where $D$ is defined in \eqref{f-1}.

Let
$$
p\, :\, \widetilde{D}\, \longrightarrow\, {\rm Sym}^d(C)\times {\rm Sym}^d(C)
$$
be the projection defined by $b\, \longmapsto\, (\beta(b)\, ,\gamma(b))$, where
$\beta$ and $\gamma$ are defined in \eqref{f1}, and $\widetilde{D}$ is constructed
in \eqref{f2}. Consider the direct image
\begin{equation}\label{f4}
V\, :=\, p_*(((\alpha\times\beta)^*L)\vert_{\widetilde{D}})\, \longrightarrow\,
{\rm Sym}^d(C)\times {\rm Sym}^d(C)\, ,
\end{equation}
where $L$ is the line bundle in \eqref{f0}. The natural projection 
$$
D\, \longrightarrow\, {\rm Sym}^d(C)\, ,\  \  (x\, ,y)\,\longmapsto\, y\, ,
$$
where $D$ is defined in \eqref{f-1},
is a finite morphism of degree $d$. This implies that $p$ is a finite morphism of degree $d$.
Consequently, the direct image $V$ is a vector bundle on ${\rm Sym}^d(C)\times
{\rm Sym}^d(C)$ of rank $d$.

Consider the natural inclusion ${\mathcal O}_Y\,\hookrightarrow\, {\mathcal O}_Y(D)\,=\, L$
(see \eqref{f0}). Let
\begin{equation}\label{f-3}
\sigma_0\, \in\, H^0(Y,\, L)
\end{equation}
be the section given by the constant function $1$ using this inclusion. Let
$$
\sigma\, :=\, p_*(((\alpha\times\beta)^*\sigma_0)\vert_{\widetilde{D}})\,\in\,
H^0({\rm Sym}^d(C)\times {\rm Sym}^d(C),\, V)
$$
be the section of $V$ (constructed in \eqref{f4}) given by $\sigma_0$.

We will show that the scheme theoretic inverse image $$\sigma^{-1}(0)\, \subset\,
{\rm Sym}^d(C)\times {\rm Sym}^d(C)$$ is the diagonal.

Take any point $(a\, ,b)\,\in\, {\rm Sym}^d(C)\times {\rm Sym}^d(C)$ such that $a\,\not=\,
b$. Then there is a point $z\, \in\, C$ such that the multiplicity of $z$ in $a$
is strictly smaller than the multiplicity of $z$ in $b$. We note that the scheme theoretic
inverse image
$$
p^{-1}((a\, ,b))\, \subset\, \widetilde{D}\, \subset\, Z\,=\, C\times
{\rm Sym}^d(C)\times {\rm Sym}^d(C)
$$
is $\{(a\, ,b)\}\times \widehat{b}$, where $\widehat{b}$ is the zero dimensional
subscheme of $C$ of length $d$ defined by $b$. On the other hand, for the
section $\sigma_0$ in \eqref{f-3}, the intersection
$\sigma_0^{-1}(0)\bigcap (C\times \{a\})$ is the zero dimensional
subscheme $\widehat{a}$ of $C$ of length $d$ defined by $a$.
Since the multiplicity of $z$ in $a$
is strictly smaller than the multiplicity of $z$ in $b$, we have
$$
\sigma_0((z_0\, ,b))\,\not=\, 0\, .
$$
Consequently, $\sigma((a\, ,b))\,\not=\, 0$.

Now take a point $(a\, ,a)$ on the diagonal of ${\rm Sym}^d(C)\times {\rm Sym}^d(C)$.
We have observed above that the inverse
image $$p^{-1}((a\, ,a))\, \subset\, C$$ coincides with
the intersection $\sigma_0^{-1}(0)\bigcap (C\times {a})$. This implies that
\begin{itemize}
\item $\sigma((a\, ,a))\,=\, 0$, and

\item $\sigma^{-1}(0)$ is the reduced diagonal.
\end{itemize}
Therefore, ${\rm Sym}^d(C)$ has the diagonal property.
\end{proof}

\end{document}